\def\Xint#1{\mathchoice
{\XXint\displaystyle\textstyle{#1}}%
{\XXint\textstyle\scriptstyle{#1}}%
{\XXint\scriptstyle\scriptscriptstyle{#1}}%
{\XXint\scriptscriptstyle\scriptscriptstyle{#1}}%
\!\int}
\def\XXint#1#2#3{{\setbox0=\hbox{$#1{#2#3}{\int}$ }
\vcenter{\hbox{$#2#3$ }}\kern-.6\wd0}}
\def\dashint{\Xint-}
\newcommand{\ind}{\protect\raisebox{2pt}{$\chi$}}
\newcommand{\restr}{%
  \,\raisebox{-.260ex}{\reflectbox{\rotatebox[origin=br]{-90}{$\lnot$}}}\,%
}
\title{Accessible parts of boundary for simply connected domains}
\begin{document}
\author[P. Koskela]{Pekka Koskela}
\address[Pekka Koskela]{Department of Mathematics and Statistics, University of Jyv\"askyl\"a, P.O. Box 35, FI-40014 Jyv\"askyl\"a, Finland}
\email{pekka.j.koskela@jyu.fi}

\author[D. Nandi]{Debanjan Nandi}
\address[Debanjan Nandi]{Department of Mathematics and Statistics, University of Jyv\"askyl\"a, P.O. Box 35, FI-40014 Jyv\"askyl\"a, Finland}
\email{debanjan.s.nandi@jyu.fi}

\author[A. Nicolau]{Artur Nicolau}
\address[Artur Nicolau]{Departament de Matemàtiques, Universitat Autònoma de Barcelona, 08193 Bellaterra . Barcelona, Spain}
\email{artur@mat.uab.cat}

\subjclass[2000]{}
\keywords{simply connected, John domain, Hardy inequality}
\thanks{A. Nicolau was partially supported by the grants 2014SGR75 of Generalitat de 
Catalunya and MTM2014-51824-P and MTM2017-85666-P of Ministerio de Ciencia e Innovaci\'on.  
P. Koskela and D. Nandi were partially supported by the Academy of Finland 
grant 307333.}

\theoremstyle{plain}
\newtheorem{thm}{Theorem}[section]
\newtheorem{lem}[thm]{Lemma}
\newtheorem{prop}[thm]{Proposition}
\newtheorem{cor}[thm]{Corollary}

\theoremstyle{definition}
\newtheorem{defn}[thm]{Definition}
\newtheorem{exm}[thm]{Example}
\newtheorem{prob}[thm]{Problem}

\theoremstyle{remark}
\newtheorem{rem}[thm]{Remark}


\begin{abstract}
For a bounded simply connected domain $\Omega\subset\mathbb{R}^2$, any point $z\in\Omega$ and any $0<\alpha<1$, we give a lower bound for the $\alpha$-dimensional Hausdorff content of the set of points in the boundary of $\Omega$  which can be joined to $z$ by a John curve with a suitable John constant depending only on $\alpha$, in terms of the distance of $z$ to $\partial\Omega$. In fact this set in the boundary contains the intersection $\partial\Omega_z\cap\partial\Omega$ of the boundary of a John sub-domain $\Omega_z$ of $\Omega$, centered at $z$, with the boundary of $\Omega$. This may be understood as a quantitative version of a result of Makarov. This estimate is then applied to obtain the pointwise version of a weighted Hardy inequality. 
\end{abstract}

\maketitle

\section{Introduction}Let $\Omega\subset\mathbb{C}$ be a domain. We say that $\Omega$ is $C$-John with center $z_0$ if for any $z\in\Omega$ there exists a rectifiable curve $\gamma_z$ joining $z$ and $z_0$ in $\Omega$ such that for any point $z'$ in the image of $\gamma_z$, it holds that 
$$C\text{d}_{\Omega}(z')\geq l(\gamma_z(z',z)),$$ where $\text{d}_{\Omega}(z'):=\text{dist}(z',\partial\Omega)$ and $l(\gamma_z(z',z))$ is the length of the subcurve between $z'$ and $z$. Given $A\subset\mathbb{C}$, we define the $\alpha$-Hausdorff content as $$\mathcal{H}^{\alpha}_{\infty}(A):=\inf \{\;\sum_{j=1}^{\infty}\text{diam}(E_j)^{\alpha}:\,E_j\subset\mathbb{C},\; A\subset \underset{j\in\mathbb{N}}\cup E_j \}.$$ 

Given a simply connected John domain and $z\in\Omega$ there is a John subdomain $\Omega_z$ with center $z$ so that, for the ball in the intrinsic metric (defined by taking the infimum of the lengths of rectifiable paths in the domain joining pairs of points) of radius $2\text{d}_{\Omega}(z)$, we have $B_{\Omega}(z,2\text{d}_{\Omega}(z)) \subset \Omega_z$; see \cite{KL}, for example. This statement is quantitative in the sense that the John constant of $\Omega_z$ depends only on the John constant of $\Omega$. It is easy to see that this conclusion fails for general simply connected $\Omega$: we may not capture all of $\partial B_{\Omega}(z,2\text{d}_{\Omega}(z))\cap \partial \Omega$ by $\partial \Omega_z$ for a John subdomain $\Omega_z$ for a fixed John constant. The best we can hope for is to capture a part of $\partial\Omega$ of $\mathcal{H}^1$-content of the order of 
$\text{d}_{\Omega}(z)$. Our main result gives a rather optimal conclusion.
 
\begin{thm}\label{main_reg}
Let $\Omega$ be a bounded, simply connected domain in the plane. Let $0<\alpha<1$ be fixed. Given $z\in\Omega$, there is a John subdomain $\Omega_{z}\subset \Omega$ with center $z$ and John constant depending only on $\alpha$ such that 
\begin{equation*}
\mathcal{H}^{\alpha}_{\infty}(\partial \Omega_{z}\cap\partial \Omega)\geq c(\alpha) \text{d}_{\Omega}(z)^\alpha.
\end{equation*}
\end{thm}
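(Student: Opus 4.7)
The natural approach is to construct $\Omega_z$ as the union of a tree of Whitney-adapted regions in $\Omega$, rooted at $z$. After scaling so that $\text{d}_\Omega(z) = 1$, the construction is guided by a John-type stopping rule: for a fixed constant $K = K(\alpha)$ (the target John constant), one grows a tree $T$ of Whitney cubes (or Whitney-type pieces) of $\Omega$ from $Q_0 \ni z$ by adding, at each step, only those adjacent or nearby Whitney cubes whose inclusion maintains the John bound with constant $K$ on all chains from the root. The resulting $\Omega_z$ --- defined as a slight fattening of the union of $T$ together with short connecting corridors --- is then John with center $z$ and constant $O(K)$ by construction.

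For the lower bound on $\mathcal{H}^\alpha_\infty(\partial \Omega_z \cap \partial \Omega)$ the plan is to build a Frostman measure $\mu$ supported on the leaves of $T$ accumulating on $\partial\Omega$ by distributing unit mass equally among surviving children at each generation. The Frostman condition $\mu(B(x,r)) \lesssim r^\alpha$ reduces, essentially, to showing that at each level of $T$ a definite fraction of children survive the John stopping rule, enough to produce an average branching factor exceeding $2^\alpha$ (which, since $2^\alpha < 2$, is a modest requirement). Pulling the Whitney tree back via a Riemann map $\varphi : \mathbb{D} \to \Omega$ with $\varphi(0) = z$ recasts this as a statement about the dyadic tree of subarcs of $\partial \mathbb{D}$: for dyadic $I$ and its children $J$, the derivative quantities $a_I := |\varphi'(w_I)||I|$ (with $w_I = (1-|I|)e^{i\theta_I}$ the Carleson apex above $I$) cannot collapse on all children simultaneously, since $a_I \asymp \sum_J a_J$ up to Koebe and Bloch distortion constants.

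The main obstacle will be balancing the stopping rule against the branching count: if $K$ is taken too small the John stopping aborts too many branches and the Frostman construction fails, while $K$ too large weakens the final John constant. The quantitative Makarov input --- specifically, the Bloch-type estimate on $\log|\varphi'|$ combined with a pigeonhole/maximum argument on derivatives within Carleson boxes --- is exactly what lets one choose $K = K(\alpha)$ finite (and depending only on $\alpha$) so that average branching stays above $2^\alpha$ at every scale. Once this is secured, the standard mass-distribution principle applied to $\mu$ yields $\mathcal{H}^\alpha_\infty(\partial \Omega_z \cap \partial \Omega) \geq c(\alpha) \text{d}_\Omega(z)^\alpha$ after undoing the initial normalization.
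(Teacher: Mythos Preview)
Your overall framework matches the paper's: both reduce to the Riemann map, exploit the Bloch property of $\log|\varphi'|$, build a Cantor-type set on the circle (or line) side, and use the mass distribution principle to get the $\mathcal{H}^\alpha_\infty$ lower bound. The paper constructs the sawtooth region $S(E)$ in $\mathbb{H}$ and pushes it forward; you build the Whitney tree in $\Omega$ and pull it back. These are equivalent packagings.

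The gap is in the step you flag as ``the main obstacle.'' Your proposed mechanism --- the relation $a_I \asymp \sum_J a_J$ coming from Koebe/Bloch distortion --- only controls the \emph{one-step} change in $\log|\varphi'|$: it says that on passing from $I$ to a child $J$, $\log|\varphi'(w_J)| = \log|\varphi'(w_I)| + O(1)$. It says nothing about cumulative drift. Over $n$ generations $\log|\varphi'|$ can decrease by order $n$ along the vast majority of branches (compensated by a large increase on a tiny minority), and your John stopping rule with fixed $K$ will then kill essentially all branches long before reaching the boundary. No finite choice of $K$ by itself prevents this; ``pigeonhole on derivatives in a Carleson box'' does not give you branching factor exceeding $2^\alpha$ at every scale.

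The paper's missing ingredient is an \emph{alternating} selection. Writing $u = \log|\varphi'|$, one defines stopping-time intervals $\mathcal{F}^{\pm}(I)$ where $u(z_J) - u(z_I)$ first exceeds $\pm M$, and uses a Green's-formula identity (Lemma~\ref{Green}) to show that \emph{both} families $\mathcal{F}^+(I)$ and $\mathcal{F}^-(I)$ carry at least a quarter of the length of $I$ (Lemma~\ref{size_intervals}(c)). One then alternates: from $I^{(0)}$ pass to $\mathcal{F}^+$, from those intervals pass to $\mathcal{F}^-$, and so on, so that the $+M$ and $-M$ increments cancel and $|u(z_I) - u(z_{I^{(0)}})|$ stays uniformly bounded along every surviving chain. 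This is exactly what replaces your unproven ``average branching above $2^\alpha$'' claim, and it is the Makarov idea (martingale approximation of a Bloch function) in its quantitative form. Without it the construction does not close.
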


The motivation for Theorem \ref{main_reg} partially arises from the weighted pointwise Hardy inequalities (see \cite{haj},  \cite{KM}, \cite{KL})
\begin{equation}\label{pointwise}
 |u(x)|\leq Cd_{\Omega}(x)^{1-\frac{\beta}{p}}\sup_{0<r<2\text{d}_{\Omega}(x)}\left(\dashint_{B(x,r)\cap\Omega}|\nabla u|^q\text{d}_{\Omega}^{q\beta/p}\right)^{1/q}
\end{equation}
 where $u\in C^\infty_0(\Omega)$, $1<q<p$ and $-\infty<\beta<\infty$.
This inequality immediately yields the usual weighted Hardy inequality (see \cite{H1},\cite{H2} for the classical Hardy inequality and \cite{Necas}, \cite{KL} for higher dimensional versions of it)
$$
 \int_{\Omega}|u(x)|^p \text{d}_{\Omega}(x)^{\beta-p}\, dx \leq C\int_{\Omega} |\nabla u(x)|^p \text{d}_{\Omega}(x)^{\beta}\, dx
$$
via the boundedness of the Hardy-Littlewood maximal operator on $L^{p/q}$. The pointwise Hardy inequalities were shown in \cite{KL} to hold for any simply connected John domain for all $1<p<\infty$ and every $\beta<p-1$. This is the optimal range even for Lipschitz domains; see \cite{Necas}. From Theorem \ref{main_reg} together with Theorem 5.1 in \cite{KL} we have the following corollary.

\begin{cor}
Let $\Omega\subset\mathbb{C}$ be simply connected. Let $1<p<\infty$. Then, for 
each $\beta<p-1$ there exist $1<q(\beta,p)<p$ and $C>0$ such that the 
weighted pointwise Hardy 
inequality (\ref{pointwise}) holds for each $x\in\Omega$.
\end{cor}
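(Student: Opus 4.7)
The plan is straightforward: the corollary is obtained by combining Theorem~\ref{main_reg} with Theorem 5.1 of \cite{KL}, a transfer principle that upgrades a uniform geometric condition---namely, the existence of John subdomains whose boundary traces on $\partial\Omega$ carry sufficient $\alpha$-Hausdorff content---into the pointwise weighted Hardy inequality \eqref{pointwise}. Since the requisite geometric data is exactly the conclusion of Theorem~\ref{main_reg}, the argument reduces to a calibration of parameters.

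First, I would fix $1 < p < \infty$ and $\beta < p - 1$, and then select $\alpha \in (0,1)$ close enough to $1$---depending only on $\beta$ and $p$---so that Theorem 5.1 of \cite{KL} becomes applicable with some exponent $q = q(\beta, p) \in (1, p)$. The feasibility of this choice rests on the fact that $\beta < p - 1$ is precisely the sharp range for the pointwise Hardy inequality on Lipschitz domains; the parameter $\alpha$ need only be pushed sufficiently close to $1$ to compensate for the lack of Lipschitz regularity in a general simply connected $\Omega$.

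Next, I would apply Theorem~\ref{main_reg} at every $x \in \Omega$ for this calibrated $\alpha$, obtaining a John subdomain $\Omega_x \subset \Omega$ centered at $x$ with John constant depending only on $\alpha$ (and hence only on $\beta, p$), and satisfying $\mathcal{H}^\alpha_\infty(\partial \Omega_x \cap \partial \Omega) \geq c(\alpha)\,\text{d}_{\Omega}(x)^\alpha$. This uniform family of John subdomains is precisely the input demanded by Theorem 5.1 of \cite{KL}, whose output is the desired estimate \eqref{pointwise} at $x$, with a constant $C$ depending only on $\beta$ and $p$. The main (and essentially only) technical point is verifying that the calibration of $\alpha$ is always feasible in the regime $\beta < p-1$; once that is established, the corollary follows by a direct invocation of the two cited results, with no further analytic work required beyond what is already packaged inside them.
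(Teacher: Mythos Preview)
Your proposal is correct and follows exactly the route indicated in the paper: the corollary is deduced by combining Theorem~\ref{main_reg} with Theorem~5.1 of \cite{KL}, and your discussion of the parameter calibration (choosing $\alpha$ close enough to $1$ depending on $\beta,p$) is the natural way to make that combination precise. The paper itself offers no further detail beyond citing these two results, so there is nothing to compare.
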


Above, $q$ and $C$ are independent of $\Omega.$
The corresponding weighted Hardy inequalities were already established in \cite{L}. Our proof of Theorem \ref{main_reg} is based on the following estimate for conformal maps which we expect to be of independent interest. Let $\mathbb{H}$ be the upper half plane.
 \begin{thm}\label{reg_boundary}
Let $f:\mathbb{H}\rightarrow\Omega$ be a conformal map. Let $0<\alpha<1$ be fixed. Then there exists $C(\alpha)>0$ such that the following holds. 

Given $z_0=x_0+iy_0 \in\mathbb{H}$, there exists a set $E=E(z_0,\alpha)\subset (x_0-y_0/2,x_0+y_0/2)$ such that  
\begin{enumerate}
   \item[(a)]${\mathcal{H}}^{\alpha}_{\infty}(E)\geq \frac{y^{\alpha}_0}{C(\alpha)}$
   \item[(b)] $\frac{1}{C(\alpha)}|f'(z_0)|\leq |f'(w)|\leq C(\alpha)|f'(z_0)|$
  \end{enumerate}
  
  for any point $w$ in the sawtooth region $S(E):=\{x+iy:x\in (x_0-y_0/2,x_0+y_0/2),\,d(x,E)\leq y<y_0\}$.
\end{thm}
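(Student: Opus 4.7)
The plan is to construct $E$ as a Cantor-like subset of $I := (x_0 - y_0/2, x_0 + y_0/2)$ by iterative $N$-adic subdivision. After normalizing so that $z_0 = i$, $y_0 = 1$, $x_0 = 0$, and $|f'(z_0)| = 1$, set $b(z) := \log|f'(z)|$, a harmonic function with $b(z_0) = 0$ whose gradient satisfies $|\nabla b(x+iy)| \leq C_0/y$ by Koebe's distortion theorem for univalent $f$ (so $b$ has a universal Bloch seminorm). I will choose a branching parameter $N = N(\alpha)$ and an absolute deviation threshold $M = M(\alpha)$ at the end of the argument.

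First I would establish a local area estimate, in the spirit of the classical area theorem. For an $N$-adic interval $J \subset I$ with top point $z_J := c_J + i|J|$ and $N$-children $J^{(1)}, \dots, J^{(N)}$, the top Whitney squares $Q_{J^{(i)}} := J^{(i)} \times (|J^{(i)}|/2,\, |J^{(i)}|]$ have pairwise disjoint images under the univalent $f$, and $|f'|$ is comparable to $|f'(z_{J^{(i)}})|$ on $Q_{J^{(i)}}$ by Koebe. This yields
\begin{equation*}
\sum_{i=1}^{N} |f'(z_{J^{(i)}})|^2 |J^{(i)}|^2 \;\leq\; C\cdot \mathrm{area}\bigl(f(J \times (|J|/(2N),\, |J|/N])\bigr) \;\leq\; C_1(N)\, |f'(z_J)|^2 |J|^2,
\end{equation*}
the last inequality coming from Koebe's distortion on a disk around $z_J$ of radius a definite fraction of $|J|$ sitting in $\mathbb{H}$.

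Next I would run a stopping-time selection. Setting $\mathcal{F}_0 := \{I\}$, include in $\mathcal{F}_{n+1}$ those $N$-children $J^{(i)}$ of $J \in \mathcal{F}_n$ for which $|b(z_{J^{(i)}})| \leq M$. Chebyshev applied to the displayed area bound controls the number of children violating $|f'(z_{J^{(i)}})| \leq e^M$, and the Koebe $1/4$-theorem on the disk $B(z_{J^{(i)}}, |J^{(i)}|/2) \subset \mathbb{H}$, coupled with a dual area bound, controls the number of children where $|f'|$ is too small. The goal is to extract at least $N^{\alpha}$ surviving children from each $J \in \mathcal{F}_n$. This is the principal obstacle: since $M$ is absolute (not relative) and the Koebe distortion over one $N$-adic generation allows $|f'|$ to jump by up to $N^{C_0}$, a naive threshold comparison does not give enough surviving children. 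The resolution is to use a layered or two-tier goodness criterion --- say deem $J$ ``strongly good'' when $|b(z_J)| \leq M - C_0 \log N$, so that strongly good parents have only $O(1)$ children failing the weak bound $|b| \leq M$, and then absorb the bounded excess using that $\alpha < 1$ provides strict slack in the Frostman count --- coupling $N$ and $M$ as suitable functions of $\alpha$.

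Once the induction closes so that $|\mathcal{F}_{n+1}| \geq N^{\alpha}\,|\mathcal{F}_n|$, let $E := \bigcap_n \bigcup_{J \in \mathcal{F}_n} J$ and equidistribute mass $\mu_0 = c(\alpha)$ among the level-$n$ survivors. Each surviving interval has length $N^{-n}$ and mass at most $c(\alpha)\,N^{-n\alpha} = c(\alpha)\,|J|^{\alpha}$, so the mass distribution principle yields $\mathcal{H}^{\alpha}_{\infty}(E) \geq c(\alpha)$ as required. For the sawtooth bound: given $w = x+iy \in S(E)$, pick the generation $n$ with $N^{-n} \leq y < N^{-n+1}$ and $J \in \mathcal{F}_n$ containing (or closest in $E$ to) $x$; by construction $|b(z_J)| \leq M$, and Koebe's distortion theorem on a ball around $z_J$ then transfers this bound to $|f'(w)| \sim |f'(z_0)|$ with a multiplicative constant $C(\alpha)$.
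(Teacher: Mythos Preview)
Your proposal has a genuine gap at the step you yourself flag as ``the principal obstacle.'' The area inequality you write down only controls upward deviations of $b$: it bounds $\sum_i |f'(z_{J^{(i)}})|^2$ from above, so Chebyshev limits the number of children with $|f'|$ too large. But there is no ``dual area bound'' coming from univalence or Koebe's $1/4$-theorem that limits the number of children with $|f'|$ too small --- the images $f(Q_{J^{(i)}})$ are disjoint, but they sit inside $\Omega$ (or inside $f(Q(J))$), whose area you have no useful control over. So nothing in your scheme prevents $b(z_J)$ from drifting steadily downward (or upward) along a branch, and the two-tier strongly/weakly good device does not help: to iterate you need many \emph{strongly} good children, and the same drift problem recurs at the shifted threshold. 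Also note that your constant $C_1(N)$ is not innocent: the strip $J\times(|J|/(2N),|J|/N]$ does not fit in any hyperbolic ball of bounded radius around $z_J$, so the trivial Koebe chaining gives $C_1(N)\sim N^{2C_0-1}$, which wipes out the Chebyshev gain.

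What is missing is exactly the ingredient the paper supplies: harmonicity of $b=\log|f'|$, not just the Bloch/univalence bound. Via Green's theorem (the paper's Lemma~3.1) one gets the martingale-type identity $u(z_I)=\sum_j u(z_{I_j})|I_j|/|I|+O(A)$, which forces the increments $u(z_{I_j})-u(z_I)$ to have nearly zero mean. Hence the stopping families $\mathcal{F}^+(I)$ and $\mathcal{F}^-(I)$ are \emph{both} large (Lemma~3.2(c)). The construction then \emph{alternates}: from $I^{(0)}$ select $\mathcal{F}^+$, from each resulting bad interval select $\mathcal{F}^-$, then $\mathcal{F}^+$ again, and so on. This alternation is precisely what keeps $|u(z_I)-u(z_{I^{(0)}})|$ bounded by $O(M)$ through all generations, replacing drift by oscillation. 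Your fixed-$N$ subdivision with an absolute threshold cannot reproduce this cancellation.
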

Theorem \ref{reg_boundary} can be understood as a quantitative version of a result of Makarov; see Theorem 5.1 of \cite{Ma}, see also corollary 1.4 of \cite{R}. Our proof of Theorem \ref{reg_boundary} uses Makarov's idea of approximating Bloch functions by dyadic martingales. Theorem \ref{main_reg} then follows from Theorem \ref{reg_boundary} and Lemma \ref{imagecontentbound} below; in fact we have that $(\Omega_{z_0},\text{d}_{\Omega_{z_0}})$ is bilipschitz equivalent to the sawtooth region in Theorem \ref{reg_boundary}. 

 \section{Preliminaries}
 
 Let $\mathbb{D}$ be the unit disk in the complex plane.
  A function $g:\mathbb{D}\rightarrow\mathbb{C}$ is called a Bloch function if it is analytic and 
  \begin{equation*}
   \|g\|_{\mathcal{B}}:=\underset{z\in\mathbb{D}}\sup (1-|z|^2)|g'(z)|<\infty.
  \end{equation*}
This defines a seminorm. The Bloch functions form a complex Banach space $\mathcal{B}$ with the norm $|g(0)|+\|g\|_{\mathcal{B}}$.

 Given a univalent analytic function $f:\mathbb{D}\rightarrow \mathbb{C}$ we have that the function $\log f'$ is a Bloch function by the Koebe Distortion theorem with $\|\log f'\|_{\mathcal{B}}\leq 6$. Conversely, given a function $g\in\mathcal{B}$ with $\|g\|_{\mathcal{B}}\leq 2$, there exists a univalent function $f:\mathbb{D}\rightarrow\mathbb{C}$ such that $g=\log f'$, see (Chapter 4, \cite{P}). Given a conformal map $f:\mathbb{H}\rightarrow\mathbb{C}$, it follows by a conformal change of coordinates that
 $$
  \underset{z\in\mathbb{H}}\sup\; \text{Im}(z)|g'(z)|<6
 $$
where $g$ is the function $\log f'$.

Let us introduce some notation. Given a closed interval $I\subset \mathbb{R}$ we denote by $x_I$ the center of $I$ and $z_I:=x_I + i|I|$. We denote by $Q(I)$ the square $\{x+iy: x\in I,\,y\in(0,|I|)\}$. The intrinsic metric of a domain 
$\Omega\subset\mathbb{C}$ is given by 
$d_{\Omega}(x,y):=\inf \{l(\gamma_{x,y}): \gamma_{x,y} \;\text{is a rectifiable curve}\newline \text{joining}\; x\;\text{and}\; y\;\text{in}\;\Omega\}$. The euclidean disk with center $z$ and radius $r$ is denoted by $B(z,r)$ and 
$B_{\Omega}(z,r)$ is the corresponding intrinsic ball. 
We denote by diam($A$), the diameter of a set $A\subset\mathbb{C}$. We denote 
by $\text{diam}_{\Omega}(A)$ the diameter of a subset $A\subset\Omega$ 
measured with respect to the intrinsic metric of $\Omega$.
 
 \section{Proofs of the theorems}
We first sketch the proof of Theorem \ref{reg_boundary}. The set $E$ 
constructed below is a Cantor-type set. One considers the
harmonic function $u=\log|f'|$, the real part of the Bloch function $\log f'$, 
where $f$ is the conformal map from Theorem \ref{reg_boundary}. The construction involves selecting ``good''  parts in the boundary near which the function $u$ 
remains essentially bounded and estimating the size of the ``bad'' parts 
in the boundary where the difference from a fixed value is large and positive 
or large and negative. The good parts correspond to the points in the boundary, accessible from some interior point of $\Omega$ by a John curve. The key 
observation is that it is possible to recursively choose subsets from the bad 
parts of the  boundary, near which the difference from the fixed value 
is ``up'' and ``down'' at consecutive generations so that the final error in 
the intersection is not too large. The set $E$ consists of the good parts and an intersection of suitable nested sets of the bad part. The Hausdorff content of $E$ is shown to be large by the mass distribution principle after defining a limit measure supported on $E$. 

We use the following well known lemma in the proof of Theorem \ref{reg_boundary}; see Lemma 2.2 of \cite{N}. 

\begin{lem}\label{Green}
Let u be a harmonic function in the upper half plane $\mathbb{H}$ such that
\begin{equation*}
 \underset{z\in\mathbb{H}}\sup\; \text{Im}(z)|\nabla u(z)|\leq A.
\end{equation*}
 Let $I\subset\mathbb{R}$ be an interval and let $\{I_j\}$ be a collection of pairwise disjoint dyadic subintervals of $I$ and assume additionally that $u$ is bounded in $Q(I)\backslash \cup_j Q(I_j)$. Then we have
\begin{equation}\label{estimate_distortion}
u(z_I)=\underset{j}\sum u(z_{I_j})\frac{|I_j|}{|I|} + \frac{1}{|I|}\int_{I\backslash\cup_j I_j} u(x)dx + O(A).
\end{equation}
\end{lem}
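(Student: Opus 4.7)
The plan is to base the proof on Green's representation for the sawtooth subdomain $V := Q(I) \setminus \bigcup_j \overline{Q(I_j)}$. Since $u$ is harmonic and bounded on $V$, after mild enlargement of $Q(I)$ to place $z_I$ in the interior,
$$u(z_I) = \int_{\partial V} u(\zeta) \, d\omega_V^{z_I}(\zeta).$$
Partition $\partial V = E_1 \cup E_2 \cup E_3$ with $E_1 = I \setminus \bigcup_j I_j$ (the uncovered portion of $I$), $E_2 = \bigcup_j \{\text{top of } Q(I_j)\}$ (the horizontal caps, at heights $|I_j|$), and $E_3$ consisting of the remaining vertical sides of the $Q(I_j)$ together with the top and vertical sides of $Q(I)$. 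The strategy is to match the contributions from $E_2$ and $E_1$ against the stated dyadic average and to absorb the contribution from $E_3$ into the $O(A)$ error via the Bloch-type bound.

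For $E_2$, the top of $Q(I_j)$ has horizontal length $|I_j|$ at height $|I_j|$, so integrating $|\nabla u(z)| \leq A/\text{Im}(z)$ gives $|u - u(z_{I_j})| \leq A$ on this top. A standard harmonic-measure estimate for sawtooth domains (via reflection in $\mathbb{R}$ or by explicit Poisson-kernel comparison) yields $\omega_V^{z_I}(\text{top of }Q(I_j)) = |I_j|/|I| + O(|I_j|^2/|I|^2)$, and summing produces the $\sum_j u(z_{I_j}) |I_j|/|I|$ piece up to an $O(A)$ error. For $E_1$, the density of $\omega_V^{z_I}$ with respect to Lebesgue measure is comparable to $1/|I|$; to obtain the exact coefficient, one compares $u$ with the auxiliary harmonic function $v$ on $V$ having boundary values $u(z_{I_j})$ on each top of $Q(I_j)$, $u$ on $E_1$, and any bounded extension on $E_3$, and uses the Bloch bound to conclude $u(z_I) - v(z_I) = O(A)$ while $v(z_I) = (1/|I|) \int_{E_1} u(x)\,dx + O(A)$.

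The main obstacle is bounding the $E_3$-contribution by $O(A)$ without invoking the (unquantified) a priori bound on $u$. The plan is a balanced estimate: on each vertical side of $Q(I_j)$, integrating the Bloch bound down from $z_{I_j}$ gives $|u(\zeta) - u(z_{I_j})| \lesssim A(1 + \log(|I_j|/\text{Im}\,\zeta))$; paired against the harmonic-measure density on that side, which by comparison with $\mathbb{H}$ decays like $\text{Im}(\zeta)/|I|^2$, this integrates to $O(A|I_j|^2/|I|^2)$. Summing and using $\sum_j |I_j|^2 \leq |I|^2$ gives the required $O(A)$, and the sides and top of $Q(I)$ are treated analogously using that the portion of this boundary carrying nontrivial harmonic measure lies within bounded hyperbolic distance from $z_I$.
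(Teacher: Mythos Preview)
Your approach has a genuine gap. The claimed harmonic-measure identity $\omega_V^{z_I}(\text{top of }Q(I_j)) = |I_j|/|I| + O(|I_j|^2/|I|^2)$ is not correct: even in the full half-plane the Poisson density from $z_I=x_I+i|I|$ is $\frac{1}{\pi}\frac{|I|}{(x-x_I)^2+|I|^2}$, which varies by a bounded multiplicative factor across $I$, not by an additive $O(|I_j|/|I|)$. Passing to the sawtooth $V$ only makes this worse, and the top and vertical sides of $Q(I)$ carry a fixed positive fraction of $\omega_V^{z_I}$ that does not lie ``within bounded hyperbolic distance of $z_I$''. Consequently your expansion produces, after matching with the target formula, a residual term of the form $\sum_j\bigl(\omega_j-|I_j|/|I|\bigr)u(z_{I_j})$ together with analogous terms on $E_1$ and $E_3$. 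Since the hypothesis only says $u$ is bounded on $V$ --- with no quantitative bound in terms of $A$ --- these residuals cannot be absorbed into $O(A)$. Subtracting $u(z_I)$ first does not help either: you would then need $|u(z_{I_j})-u(z_I)|=O(A)$, which is precisely what is \emph{not} assumed in this lemma (and in the application, Lemma~\ref{size_intervals}, this quantity is of size $M$, which must be taken large).

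The paper avoids harmonic measure altogether. It applies Green's second identity to the pair of harmonic functions $u$ and $y=\mathrm{Im}\,z$ on $U_\varepsilon=(Q(I)\setminus\overline{\cup_jQ(I_j)})\cap\{y>\varepsilon\}$, obtaining
\[
\int_{\partial U_\varepsilon} u\,\partial_\nu y\,ds \;=\; \int_{\partial U_\varepsilon} y\,\partial_\nu u\,ds .
\]
The point is that $\partial_\nu y$ equals $+1$ on the top of $Q(I)$, $-1$ on each top of $Q(I_j)$ and on $\{y=\varepsilon\}$, and $0$ on every vertical side. Thus the left side is \emph{exactly} $\int_{\text{top}(I)}u - \sum_j\int_{\text{top}(I_j)}u - \int u(x+i\varepsilon)\,dx$, and the bounded oscillation of $u$ on each top (an $O(A)$ estimate from the Bloch bound) lets one replace the top integrals by $|I|\,u(z_I)$ and $|I_j|\,u(z_{I_j})$. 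The right side is bounded by $A\cdot|\partial U_\varepsilon|\le 10A|I|$ directly from $y|\nabla u|\le A$. Dividing by $|I|$ and letting $\varepsilon\to0$ gives the lemma. The choice of $y$ as the second function is what produces the weights $|I_j|/|I|$ on the nose and kills the vertical contributions --- there is no harmonic-measure comparison to carry out.
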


\begin{proof}
 Let us write $y$ for the imaginary part of $z$ and fix $0<\epsilon<1$. Green's theorem applied to the harmonic functions $u$ and $y$ in the domain $U_{\epsilon}:=(Q(I)\backslash \overline{\cup_j(Q(I_j))}) \cap \{y>\epsilon\}$ gives
\begin{equation*}
 \int_{U_{\epsilon}} y\Delta u- \int_{\partial U_{\epsilon}} u\Delta y =  \int_{U_{\epsilon}} y \nabla u\cdot\nu ds - \int_{\partial U_{\epsilon}} u\nabla y\cdot \nu ds
\end{equation*}
and thus
\begin{equation}\label{greeneq}
 \int_{\partial U_{\epsilon}} u\nabla y\cdot \nu ds = \int_{\partial U_{\epsilon}} y \nabla u\cdot\nu ds
\end{equation}
where $\nu$ is the outward unit normal vector. The absolute value of the latter integral is bounded by $10A|I|$ by assumption. Note that the  oscillation of $u$ on the upper edges of $Q(I)$ and $Q(I_j)$ is bounded; indeed 
$$
 |u(x+i|I_j|)-u(x'+i|I_j|)|\leq A|x-x'|/|I_j| 
$$
for $x,x'\in I_j$. From (\ref{greeneq}) we have 
\begin{equation*}
 u(z_I)=\sum_{|I_j|>\epsilon} u(z_{I_j})\frac{|I_j|}{|I|} + \frac{1}{|I|}\int_{I} u(x+i\epsilon)\ind_{I\backslash (\underset{|I_j|>\epsilon}\cup I_j)}(x)\,dx + O(A)
\end{equation*}because the vertical sides of $\partial (Q(I_j))$ do not contribute to the integral.
The estimate now follows once we let $\epsilon\rightarrow 0$, since the function
$u$ has radial limits almost everywhere in $I\backslash \cup_j I_j.$ 
\end{proof}

\begin{lem}\label{size_intervals} Let $u$ be a harmonic function in the upper half plane $\mathbb{H}$ such that
\begin{equation*}
 \underset{z\in\mathbb{H}}\sup\; Im(z)|\nabla u(z)|\leq A.
\end{equation*} Then there is a number $M_0=M_0(A)$ such that the following holds for any $M>M_0$. 

Given any interval $I\subset\mathbb{R}$, define
\begin{equation*}
G(I):=\{\text{Re}(z): z\in Q(I),\,  \underset{0<\text{Im}(z)<|I|}\sup |u(z)-u(z_I)|\leq M+A\sqrt{2}\},
\end{equation*}
and assume that $|G(I)|\leq \frac{|I|}{100}$. Consider the family $\mathcal{F}(I)$ of maximal dyadic subintervals $I_j\subset I$ such that $|u(z_{I_j})-u(z_I)|\geq M$. Then we have
\begin{enumerate}
\item[(a)] $|u(z)-u(z_I)|\leq M+A\sqrt{2}$ for any $z\in Q(I)\backslash \underset{j}\cup Q(I_j)$.  In particular, $|u(z_{I_j})-u(z_I)|\leq M+A\sqrt{2}$.
\item[(b)] $|I_j|\leq 2^{-\frac{M}{A\sqrt{2}}} |I|$ for every $I_j\in \mathcal{F}(I)$.
\item[(c)] Consider the family $\mathcal{F}^+(I)$ (respectively $\mathcal{F}^-(I)$) of intervals in $\mathcal{F}(I)$ such that $u(z_{I_j})-u(z_I)\geq M$ (respectively $u(z_{I_j})-u(z_I)\leq -M$). Then we have 
\begin{enumerate}
 \item[(i)]$\underset{I_j\in \mathcal{F}^+(I)}\sum |I_j|\geq |I|/4$
 \item[(ii)]$\underset{I_j\in \mathcal{F}^-(I)}\sum |I_j|\geq |I|/4$
\end{enumerate}
\end{enumerate}
\end{lem}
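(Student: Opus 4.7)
The strategy is to prove (a) and (b) by direct estimates using the hyperbolic gradient bound $|\nabla u(z)| \leq A/\operatorname{Im}(z)$, and then to prove (c) by applying Lemma \ref{Green} to the harmonic function $v := u - u(z_I)$, using (a) to verify its boundedness hypothesis. The assumption $|G(I)| \leq |I|/100$ will only enter in (c).

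For (a), I would associate to each $z = x+iy \in Q(I)\setminus\bigcup_j Q(I_j)$ the smallest dyadic subinterval $J$ of $I$ containing $x$ with $|J| > y$, so that $|J|/2 \leq y < |J|$. The key observation is that $J$ cannot be contained in any $I_{j_0}$, for that would place $z$ in $Q(I_{j_0})$. Hence $J$ either strictly contains some $I_j$ or is disjoint from all of them; in either case the maximality defining $\mathcal{F}(I)$ forces $|u(z_J)-u(z_I)| < M$. An L-shaped path from $z$ to $z_J$ (vertical up to $x + i|J|$, then horizontal at height $|J|$) combined with the gradient bound yields $|u(z)-u(z_J)| \leq A(\log 2 + 1/2) < A\sqrt{2}$, which gives (a); the ``in particular'' clause follows because $z_{I_j}$ lies on the top edge of $Q(I_j)$ and therefore outside every $Q(I_k)$. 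For (b), the same type of L-shaped path from $z_{I_j}$ up to $x_{I_j}+i|I|$ and across to $z_I$ gives $M \leq |u(z_{I_j})-u(z_I)| \leq A\log(|I|/|I_j|) + A/2$; rearranging, and choosing $M_0(A)$ large enough to absorb the $A/2$ term into the exponent, produces $|I_j|/|I| \leq 2^{-M/(A\sqrt{2})}$.

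For (c), part (a) verifies the boundedness hypothesis of Lemma \ref{Green}, so applying it to $v$ yields
\begin{equation*}
0 = \sum_j v(z_{I_j})\frac{|I_j|}{|I|} + \frac{1}{|I|}\int_{I\setminus\bigcup_j I_j} v(x)\,dx + O(A).
\end{equation*}
The set $I\setminus\bigcup_j I_j$ is contained in $G(I)$, since for any $x$ outside every $I_j$ the entire vertical segment above it misses $\bigcup_j Q(I_j)$, and hence (a) bounds $|v|$ there by $M+A\sqrt{2}$. By hypothesis this set has measure at most $|I|/100$, so the middle integral contributes at most $(M+A\sqrt{2})/100$ in absolute value, while $L^+ + L^- \geq 99/100$, where $L^\pm := \sum_{I_j\in\mathcal{F}^\pm(I)} |I_j|/|I|$. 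Using the two-sided bound $M \leq v(z_{I_j}) \leq M + A\sqrt{2}$ on $\mathcal{F}^+(I)$ and its reverse on $\mathcal{F}^-(I)$, the weighted sum on the right evaluates to $M(L^+ - L^-) + O(A)$, and plugging back in gives $M|L^+ - L^-| \leq M/100 + O(A)$. The main obstacle is calibrating $M_0(A)$: the $A$-sized error terms coming from the path estimates, from Lemma \ref{Green}, and from the boundary integral must all be dominated by the $M$-sized main terms. Since each error is explicitly $O(A)$, choosing $M_0$ as a sufficiently large multiple of $A$ forces $|L^+ - L^-| \leq 1/50$, which together with $L^+ + L^- \geq 99/100$ gives $L^\pm \geq 97/200 > 1/4$, establishing (c).
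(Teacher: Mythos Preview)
Your proof is correct and follows essentially the same approach as the paper: parts (a) and (b) via the hyperbolic gradient bound combined with the dyadic stopping-time structure, and part (c) via Lemma~\ref{Green}, the inclusion $I\setminus\bigcup_j I_j\subset G(I)$, and the two-sided bound $M\leq |u(z_{I_j})-u(z_I)|\leq M+A\sqrt{2}$. The only differences are cosmetic: the paper uses the straight-line estimate $|u(z)-u(z_J)|\leq A\sqrt{2}$ on the top half of $Q(J)$ and iterates it along the dyadic ancestor chain for (b), whereas you use L-shaped paths (yielding the slightly sharper $A(\log 2+1/2)$ and the direct bound $A\log(|I|/|I_j|)+A/2$); and for (c) the paper presents the same computation as a contradiction argument rather than your direct bound on $|L^+-L^-|$.
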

\begin{proof}
 Given $z=x+iy\in \mathbb{H}$ such that $x\in I$ (respectively $I_j$) and $|I|/2<y<|I|$ (respectively $|I_j|/2<y<|I_j|$) it follows that $|u(z)-u(z_I)|\leq A\sqrt{2}$ (respectively $|u(z)-u(z_{I_j})|\leq A\sqrt{2}$) by our hypothesis. 
 
 Part (b) follows by iterating the above inequality and part (a) follows from the maximality of the dyadic intervals.
 
 For part (c) we write the estimate from Lemma \ref{Green} as
 \begin{equation*}
  \underset{j}\sum (u(z_{I_j})-u(z_I))\frac{|I_j|}{|I|} + \frac{1}{|I|} \underset{I\backslash \underset{j}\cup I_j}\int (u(x)-u(z_I))dx=\delta
 \end{equation*}
where $\delta=\delta(u,A)$ lies in the interval $[-\delta_A,\delta_A]$, where $\delta_A$ is a constant that depends only on $A$. We observe that $I\backslash \underset{j} \cup I_j\subset G(I)$. Thus the absolute value of the integral is bounded by $\frac{M+A\sqrt{2}}{100}$, by part (a) and the assumption that $|G(I)|\leq |I|/100$. Hence we have 
\begin{equation*}
 \abs{\underset{j}\sum (u(z_{I_j})-u(z_I))\frac{|I_j|}{|I|}}\leq \frac{M+A\sqrt{2}}{100}+ |\delta|.
\end{equation*}
Next we note that $M\leq |u(z_{I_j})-u(z_I)|\leq M+A\sqrt{2}$ for any $j$. Part (c) then follows from this. Indeed, if \begin{equation}\label{contra_1}\underset{I_j\in\mathcal{F}^+(I)}\sum \frac{|I_j|}{|I|}\leq \frac{1}{4},\end{equation} then we have
\begin{equation*}
\underset{I_j\in\mathcal{F}^-(I)}\sum \frac{|I_j|}{|I|}\geq \frac{74}{100}
\end{equation*}
and
\begin{equation*}
 -\frac{M+A\sqrt{2}}{100} - |\delta| \leq \frac{M+A\sqrt{2}}{4} + \underset{I_j\in\mathcal{F}^-(I)}\sum (u(z_{I_j})-u(z_I))\frac{|I_j|}{|I|}
\end{equation*}
from which we get
\begin{equation*}
 \underset{I_j\in\mathcal{F}^-(I)}\sum \frac{|I_j|}{|I|}\leq \frac{26}{100}+\frac{26A\sqrt{2}}{4M}+\frac{\delta_A}{M}.
\end{equation*}
This contradicts (\ref{contra_1}) if $M>M_0(A)$. The other inequality in part (c) follows similarly.
\end{proof}

Now we are ready to prove Theorem \ref{reg_boundary}.
\begin{proof}[Proof of Theorem \ref{reg_boundary}] Let $0<\alpha<1$ be fixed.

We may assume without loss of generality that $z_0=i$. We construct the set $E$ as follows. Set $u(z)=\log(|f'(z)|)$. Then $u$ is the real part of a Bloch function and thus satisfies the hypothesis of Lemma \ref{size_intervals} with $A=6$.

Denote by $I_0$ the interval $(-\frac{1}{2},\frac{1}{2})$. Consider the set $Q(I^{(0)})$ and the subset $G(I^{(0)})$ as defined in Lemma \ref{size_intervals}. An interval $I$ is called ``good'' if $|G(I)|\geq |I|/100$ and ``bad'' otherwise. If $|G(I^{(0)})|\geq |I^{(0)}|/100$, then set $E=G(I^{(0)})$. Then $\mathcal{H}^{\alpha}_{\infty}(E)\gtrsim 1$ and the claim follows.

So we assume that the other case holds and consider the maximal family $\mathcal{F}(I^{(0)})$ of subintervals $I_j\subset I_0$ as chosen in Lemma \ref{size_intervals}, with $M=M(\alpha)$ to be fixed later. Thus $I_0$ is a bad interval and we may apply Lemma \ref{size_intervals}. We have
\begin{equation*}
 |I|\leq 2^{-\frac{M}{6\sqrt{2}}} |I^{(0)}|\quad \text{if}\; I\in\mathcal{F}^+(I^{(0)})
\end{equation*}
and
$$
 \sum_{I\in\mathcal{F}^+(I^{(0)})}|I| \geq |I^{(0)}|/4.
$$

The first generation $\mathcal{G}_1=\mathcal{G}_1(I^{(0)})$ is formed by the subsets $G(I)$ of the good intervals $I\in\mathcal{F}^+(I^{(0)})$ and by the bad intervals $I\in\mathcal{F}^+(I^{(0)})$. We write $\mathcal{G}_1=\mathcal{G}^g_1\cup \mathcal{G}^b_1$ where 
\begin{equation*}
\mathcal{G}^g_1(I^{(0)})=\{G(I):I\in\mathcal{F}^+(I^{(0)})\;\text{is good}\}
\end{equation*}
and
\begin{equation*}
\mathcal{G}^b_1(I^{(0)})=\{I\in \mathcal{F}^+(I^{(0)}): I \; \text{is bad}\}.
\end{equation*}
We also have 
\begin{equation*}
 \sum_{I\in\mathcal{G}_1}|I|\geq |I^{(0)}|/400.
\end{equation*}

The construction stops in the sets in the family $\mathcal{G}_1^g$ of good sets. In the sets $I\in\mathcal{G}_1^b$ it continues as follows. 

Fix $I^{(1)}\in\mathcal{G}^b_1$. Since $I^{(1)}$ is bad we can apply Lemma \ref{size_intervals} and consider the collection $\mathcal{F}^-(I^{(1)})$ which satisfies 
\begin{equation*}
|I|\leq 2^{-\frac{M}{6\sqrt{2}}} |I^{(1)}|\quad \text{if}\; I\in\mathcal{F}^-(I^{(1)})
\end{equation*}
and
\begin{equation*}
\sum_{I\in\mathcal{F}^-(I^{(1)})}|I| \geq |I^{(1)}|/400.
\end{equation*}
The first generation $\mathcal{G}_1(I^{(1)})$ of the interval $I^{(1)}\in\mathcal{G}^b_1$ is written $\mathcal{G}^g_1(I^{(1)})=\mathcal{G}_1(I^{(1)})\cup \mathcal{G}^b_1(I^{(1)})$, where 
\begin{equation*}
 \mathcal{G}^g_1(I^{(1)})=\{G(I):I\in\mathcal{F}^-(I^{(1)})\;\text{is good}\}
\end{equation*}
and
\begin{equation*}
 \mathcal{G}^b_1(I^{(1)})=\{I\in\mathcal{F}^-(I^{(1)}):I\;\text{is bad}\}.
\end{equation*}

We use the first generation as defined above, of members of the collection $\mathcal{G}^b_1(I^{(0)}),$ to define the second generation $\mathcal{G}_2(I^{(0)})=\mathcal{G}^g_2(I^{(0)})\cup \mathcal{G}^b_2(I^{(0)})$, where
\begin{equation*}
 \mathcal{G}^g_2(I^{(0)})=\underset{I^{(1)}\in\mathcal{G}^b_1(I^{(0)})}\cup \mathcal{G}^g_1(I^{(1)})
\end{equation*}
and
\begin{equation*}
 \mathcal{G}^b_2(I^{(0)})=\underset{I^{(1)}\in\mathcal{G}^b_1(I^{(0)})}\cup \mathcal{G}^b_1(I^{(1)}).
\end{equation*}
We also have 
\begin{equation*}
 \sum_{\substack{I\subset I^{(1)}\\I\in\mathcal{G}_2}}|I|\geq |I^{(1)}|/400\quad \text{for any}\; I^{(1)}\in\mathcal{G}^b_1.
\end{equation*}

Observe that $u$ oscillates to the right of $u(z_{I_0})$ in the first step of the construction and to the left of $u(z_I)$ in the second. We have 
\begin{equation*}
 |u(z_I)-u(z_{I^{(0)}})|\leq 12\sqrt{2}\quad \text{for any}\; I\in\mathcal{G}^b_2.
\end{equation*}

Again the construction continues in the intervals of $\mathcal{G}^b_2(I^{(0)})$. Since the errors cancel but do not vanish, we use a slightly different value of $M$, if needed, for choosing the maximal family $\mathcal{F}(I)$ for the bad intervals $I\in\mathcal{G}^b_2$, so that the errors do not add up. More precisely, given $I^{(2)}\in\mathcal{G}^b_2$, we choose a value $M'$ from the interval $[M-6\sqrt{2},M+6\sqrt{2}]$ such that $u(z_{I^{(2)}})+M'=u(z_{I^{(0)}})+M$. 

So the construction stops after finitely many steps or continues indefinitely providing new generations $\mathcal{G}_n$. Let $I^{(n)}\in\mathcal{G}_n$. Either $I^{(n)}$ is of the form $G(\tilde{I})$ and the construction stops in $I^{(n)}$ or $I^{(n)}\in\mathcal{G}^b_n(I^{(0)})$ and the construction provides new sets and intervals of $\mathcal{G}_{n+1}$ contained in $I^{(n)}$ which satisfy 
\begin{equation*}
|I|\leq 2^{-\frac{M}{6\sqrt{2}}} |I^{(n)}|\quad \text{if}\; I\subset I^{(n)}\;,I\in\mathcal{G}_{n+1}
\end{equation*}
and
\begin{equation*}
\sum_{\substack{I\subset I^{(n)}\\I\in\mathcal{G}_{n+1}}}|I| \geq |I^{(n)}|/400.
\end{equation*}

We define 
$$
E:= (\cup_{n}\mathcal{G}^g_n) \cup (\cap_n \mathcal{G}^b_n).$$
By construction for any $x\in E$ and any $0\leq y \leq 1$ we have 
$$
 |\log|f'(x+iy)|-\log|f'(i)||\leq 2M+24.
$$
Consider the set $S(E)$ as defined in the statement of the theorem. Since $u$ is a Bloch function, the previous estimate gives that 
\begin{equation*}
e^{-2M-30}|f'(i)|\leq |f'(w)|\leq e^{2M+30}|f'(i)|
\end{equation*}
for any $w\in S(E)$. Thus part (b) of the statement follows.

Part (a) of the statement follows if it is shown that 
\begin{equation*}
\mathcal{H}^{\alpha}_{\infty}(E)\geq c(\alpha).
\end{equation*}
To prove the last estimate, it suffices by the mass distribution principle to construct a positive measure $\mu$ with $\mu(E)\geq 1$ such that there exists a constant $c(\alpha)>0$ with
$$
 \mu(I)\leq c(\alpha) |I|^{\alpha},
$$ for any interval $I\subseteq I_0$. 
The measure $\mu$ will be the limit of certain measures $\mu_n$ supported in the union $(\underset{k\leq n}\cup\mathcal{G}^g_k)\cup \mathcal{G}^b_n$, where $\mathcal{G}^g_k$ are the good parts of the previous generations.

Next we construct the measure $\mu$. 
Let $\mu_0=dx\restr I^{(0)}$. Consider \begin{equation*}a(I^{(0)})=\frac{|I^{(0)}|}{\sum_{I\in\mathcal{G}_1}|I|}\end{equation*}
which satisfies $a(I^{(0)})\leq 400$. By defining
$$
 \mu_1 := a(I^{(0)})\sum_{I\in\mathcal{G}_1} dx\restr I
$$
we have $\mu_1(I^{(0)})=1$. The measure $\mu_2$ will coincide with $\mu_1$ on $\mathcal{G}^g_1$. On $\mathcal{G}_2$ the measure $\mu_2$ will be defined by redistributing the mass of $\mu_1$. More concretely, if $I^{(1)}\in\mathcal{G}^b_1$ set 
\begin{equation*}
 a(I^{(1)})=\frac{\mu_1(I^{(1)})}{\sum_{\substack{I\subset I^{(1)}\\I\in\mathcal{G}_2}}|I|}.
\end{equation*}
Since 
\begin{equation*}
 a(I^{(1)})= \frac{|I^{(1)}|}{\sum_{\substack{I\subset I^{(1)}\\I\in\mathcal{G}_2}}|I|}\frac{\mu_1(I^{(1)})}{|I^{(1)}|}=a(I^{(0)})\frac{|I^{(1)}|}{\sum_{\substack{I\subset I^{(1)}\\I\in\mathcal{G}_2}}|I|},
\end{equation*}
we deduce that $a(I^{(1)})\leq 400^2$. Define 
\begin{equation*}
 \mu_2 = \mu_1\restr \mathcal{G}^g_1 + \sum_{I^{(1)}\in\mathcal{G}^b_1}a(I^{(1)})\sum_{\substack{I\subset I^{(1)}\\I\in\mathcal{G}_2}}dx\restr I.
\end{equation*}
The measures $\mu_3,\ldots,\mu_n,\ldots$ are defined recursively. Observe that $\mu_k(I)=\mu_n(I)$ for any $k\geq n$, provided $I\in\mathcal{G}_n$. Moreover, if $I\in\mathcal{G}_n$ we have \begin{equation*}\frac{\mu_n(I)}{|I|}\leq 400^n.\end{equation*}

Finally set \begin{equation*}\mu=\underset{n\rightarrow\infty}\lim \mu_n.\end{equation*}

It is clear that spt$\mu\subset E$ and $\mu(E)=1$. We want to check that $\mu(I)\leq c(\alpha)|I|^{\alpha}$ for any interval $I\subseteq I_0$. Let $J\subset I_0$ be an interval. We may assume that there is a positive integer $j$ such that \begin{equation*}2^{-\frac{M(j+1)}{6\sqrt{2}}}\leq |J|\leq 2^{-\frac{Mj}{6\sqrt{2}}}.\end{equation*} Let $\mathcal{G}_j(J)$ (respectively $\mathcal{G}^g_k(J)$) be the family of sets of generation $\mathcal{G}_j$ (respectively $\mathcal{G}^g_k$) which intersect $J$. Let $\mathcal{A}_j(J)$ be the family  of sets in $\cup^{j-1}_{k=0}\mathcal{G}^g_k(J)$ of diameters smaller than $2^{-\frac{Mj}{6\sqrt{2}}}$. Since the sets in $\mathcal{G}_j(J)\cup \mathcal{A}_j(J)$ intersect $J$ and have diameter smaller than $2|J|$, we have 
$$
 \mathcal{G}_j(J)\cup\mathcal{A}_j(J)\subset 4J.
$$
Hence
\begin{equation*}
\begin{split}
 \mu(J)&\leq \sum_{I\in\mathcal{G}_j(J)}\mu(I) + \sum^{j-1}_{k=0}\sum_{I\in\mathcal{G}^g_k(J)}\mu(I\cap J)\\
 &\leq \sum_{I\in\mathcal{G}_j(J)\cup \mathcal{A}_j(J)}\mu(I) + \sum_{I\in\cup^{j-1}_{k=0}\mathcal{G}^g_k(J)\backslash\mathcal{A}_j(J)}\mu(I\cap J)\\
 &=: A+B.
 \end{split}
\end{equation*}

If $I\in\mathcal{G}_j(J)\cup \mathcal{A}_j(J)$, then we have 
\begin{equation*}
 \mu(I)=\mu_j(I)=\frac{\mu_j(I)}{|I|}|I|\leq 400^j |I|.
\end{equation*}
Hence
\begin{equation*}
 A\leq 400^j \sum_{\substack{I\subset 4J\\I\in\mathcal{G}_j(J)\cup\mathcal{A}_j(J)}}|I|\leq 4\cdot 400^j|J|.
\end{equation*}
Since the sets in $\cup^{j-1}_{k=0}\mathcal{G}^g_k(J)\backslash \mathcal{A}_j(J)$ intersect $J$ and are contained in intervals of length larger than $|J|$ which are pairwise disjoint, the collection $\cup^{j-1}_{k=0}\mathcal{G}^g_k(J)\backslash \mathcal{A}_j(J)$ is contained in at most two intervals $L_1$ and $L_2$. Now
\begin{equation*}
 \begin{split}
 B&\leq \sum^2_{i=1} \mu(L_i)\leq 400^j \sum^2_{i=1} |L_i\cap J|\leq 400^j[J| .
 \end{split}
\end{equation*}
Hence $\mu(J)\leq A+B\leq 5\cdot 400^j |J|$. 

Since $|J|\leq 2^{-\frac{Mj}{6\sqrt{2}}}$, we deduce that 
$$
 \mu(J)\leq 5|J|^{1-\frac{6\sqrt{2}\ln_2(400)}{M}}.
$$
We choose $M$ large enough so that $1-\frac{6\sqrt{2}\ln_2(400)}{M}>\alpha$. The theorem follows with this value of $M$.
\end{proof}
The proof of Theorem \ref{main_reg} uses the following auxiliary result.
\begin{lem}\label{imagecontentbound}
Assume that a set $E=E(z_0,\alpha)\subset\mathbb{R}$ exists, for which part (b) of Theorem \ref{reg_boundary} is satisfied. Then we have that $f(S(E))$ is a John domain with John constant depending on $\alpha$. Moreover, 
\begin{equation*}
\mathcal{H}^{\alpha}_{\infty}(f(E))\geq c(\alpha)|f'(z_0)|^{\alpha} \mathcal{H}^{\alpha}_{\infty}(E).
\end{equation*}
\end{lem}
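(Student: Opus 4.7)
The plan is to leverage the near-constancy of $|f'|$ on the sawtooth region $S(E)$ provided by part (b) of Theorem \ref{reg_boundary} to argue that $f|_{S(E)}$ behaves essentially like a scaled similarity, which lets us transfer the geometry of $S(E)$ to $f(S(E))$. The argument naturally splits into two parts: the John property of the image domain, and the Hausdorff-content inequality, which will reduce to a bilipschitz estimate of $f$ on $E$.

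For the John property, I would first verify that $S(E)$ itself is a John domain with a \emph{universal} constant, with center at some $z_0^* \in S(E)$ near the top (say at height roughly $3y_0/4$ beneath $z_0$). For any $w = x+iy\in S(E)$, a candidate John curve goes from $w$ along a slope-one path drifting toward $x_0$ and then horizontally near the top to $z_0^*$; since $x \mapsto d(x, E)$ is $1$-Lipschitz, such a path stays in $S(E)$, and the John inequality is verified by direct inspection of the sawtooth geometry, using that along the slope-one portion the vertical distance to the sawtooth floor grows linearly with arclength measured toward $w$. Next, $f|_{S(E)}\colon S(E)\to f(S(E))$ is a conformal map between simply connected domains because $f$ is univalent on $\mathbb{H}$, so Koebe's distortion theorem yields $d_{f(S(E))}(f(w)) \asymp |f'(w)|\,d_{S(E)}(w)$ with a universal constant; part (b) then upgrades this to $\asymp_\alpha |f'(z_0)|\,d_{S(E)}(w)$. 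Pushing a John curve in $S(E)$ forward by $f$ and combining the two facts at each intermediate point gives the John property of $f(S(E))$ with constant depending only on $\alpha$.

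For the content inequality, I would extend $f$ continuously to each $x \in E$, which is possible because $\{x+iy : 0<y<y_0\}$ lies in $S(E)$ where $|f'|$ is bounded, so $\int_0^{y_0}|f'(x+iy)|\,dy$ is finite and the radial limit exists. The goal is then to establish the bilipschitz estimate
\begin{equation*}
 c(\alpha)\,|f'(z_0)|\,|x_1-x_2| \;\le\; |f(x_1)-f(x_2)| \;\le\; C(\alpha)\,|f'(z_0)|\,|x_1-x_2|
\end{equation*}
for $x_1, x_2 \in E$. Given the lower bound, any cover $\{F_j\}$ of $f(E)$ pulls back to a cover of $E$ by sets of Euclidean diameters at most $\mathrm{diam}(F_j)/(c(\alpha)|f'(z_0)|)$, yielding the claimed content inequality directly. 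The upper bound is immediate from part (b) by integrating $|f'|$ along the vertical--horizontal--vertical path of total length at most $3|x_1-x_2|$ inside $\overline{S(E)}$.

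The main obstacle is the lower bilipschitz estimate on $E$. A naive comparison of $|f(x_1)-f(x_2)|$ with $|f(w_1)-f(w_2)|$ at interior lifts $w_i = x_i + i|x_1-x_2|/2$ via Koebe distortion fails, because the descent errors $|f(w_i)-f(x_i)|$, controlled only by the arclength $\int|f'|\,dy$, can be of the same order as the main term. To overcome this, I would apply Koebe's one-quarter theorem to the conformal map $f|_{S(E)}$ at the apex $z^* = (x_1+x_2)/2 + i|x_1-x_2| \in S(E)$ (where $d_{S(E)}(z^*) \asymp |x_1-x_2|$), producing a ball $B(f(z^*),\,c(\alpha)|f'(z_0)||x_1-x_2|) \subset f(S(E))$, and combine this with the John property of $f(S(E))$ already established. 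The John property prevents the image boundary from pinching at scales below the corresponding interior scale of the two prime ends associated to $x_1$ and $x_2$, and together with the Koebe ball this forces $f(x_1)$ and $f(x_2)$ to be separated by at least a definite multiple of $|f'(z_0)||x_1-x_2|$.
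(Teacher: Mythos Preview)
Your treatment of the John property of $f(S(E))$ is essentially the paper's: verify $S(E)$ is John with a universal constant, then push John curves forward using $d_{f(S(E))}(f(w))\asymp |f'(w)|\,d_{S(E)}(w)\asymp_\alpha |f'(z_0)|\,d_{S(E)}(w)$.

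For the content inequality, however, your route diverges from the paper's and has a genuine gap. You aim for a Euclidean bilipschitz bound $|f(x_1)-f(x_2)|\asymp_\alpha |f'(z_0)|\,|x_1-x_2|$ on $E$; the upper bound is fine, but your argument for the lower bound does not go through. Koebe at $z^*$ only tells you that both $f(x_1),f(x_2)\in\partial\Omega$ lie \emph{outside} a fixed ball $B(f(z^*),c(\alpha)|f'(z_0)|h)$; it says nothing about their mutual distance. Invoking the John property of $f(S(E))$ does not close the gap either: John domains can have boundary points that are close in the Euclidean metric yet far in the intrinsic metric (think of a disk with a radial slit, or a thin ``pacman''), so the John condition alone does not ``prevent pinching'' of $\partial f(S(E))$. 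Your appeal to ``the interior scale of the two prime ends'' is not a usable quantitative statement. In short, the step you yourself flag as the main obstacle remains an obstacle.

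The paper avoids this difficulty entirely by never asserting a Euclidean lower bilipschitz bound. It works with the \emph{intrinsic} metric of $S'(E):=f(S(E))$, where the bilipschitz equivalence with $S(E)$ is immediate from $|f'|\asymp_\alpha|f'(z_0)|$. Given an arbitrary cover $\{B_i\}$ of $f(E)$ by Euclidean disks of radii $R_i$, for each point of $f(E)\cap B_i$ one runs along its John curve for arclength $R_i$ to reach a point $f(z)$ with $d_{S'(E)}(f(z))\ge R_i/c(\alpha)$, and covers $f(E)\cap B_i$ by the intrinsic balls $B_{S'(E)}(f(z),c(\alpha)d_{S'(E)}(f(z)))$. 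A $5r$-covering argument plus the fact that each such intrinsic ball contains a Euclidean disk of radius $\asymp R_i$ (hence only boundedly many can be disjoint inside a fixed multiple of $B_i$) shows that $N(\alpha)$ intrinsic balls suffice for each $i$. Pulling these back by $f^{-1}$, using the intrinsic bilipschitz equivalence, yields a cover of $E$ by sets of intrinsic $S(E)$-diameter $\lesssim_\alpha R_i/|f'(z_0)|$, and since $S(E)$ is Lipschitz this controls Euclidean diameters as well. Summing gives $\sum_i R_i^\alpha\gtrsim_\alpha |f'(z_0)|^\alpha\mathcal{H}^\alpha_\infty(E)$. This argument uses the John property exactly where it has force (to bound the \emph{number} of intrinsic balls per Euclidean ball), rather than to force a Euclidean separation on the boundary.
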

\begin{proof}
We may assume that $E$ is compact. We have for $f(z)\in f(S(E))$ that 
\begin{equation*} 
\text{d}_{f(S(E))}(f(z),\partial f(S(E)))\gtrsim c(\alpha)|f'(z_0)|\text{d}_{S(E)}(z)
\end{equation*}
from which it follows that $f(S(E))$ is John, with John curves being the images of the John curves in $S(E)$ (which we may take to be the vertical line segment joining the point $z$ to the upper edge of $Q(I)$ followed by a horizontal line segment till $z_I$). The constant only depends on $\alpha$.

Let $\{B_i\}_{i}$ be a countable (possibly finite) collection of disks covering $f(E)$. We may assume that $f(z_0)\notin B_i$ for all indices $i$.  For each point $f(\hat{z})\in f(E)\cap B_i$ consider the John curve $\gamma_{\hat{z}}$ joining $f(\hat{z})$ to  the John center $f(z_0)$. Let $f(z)\in \gamma_{\hat{z}}$ be a point such that $l({\gamma_{\hat{z}}}(f(\hat{z}),f(z)))=\text{rad}(B_i)=:R_i$. By the John condition we have that $\text{d}_{f(S(E))}(f(z))\geq \frac{1}{c(\alpha)} R_i$. 

In the following we write $S'(E)$ for the set $f(S(E))$. Consider the collection of intrinsic balls $\{B_{S'(E)}(f(z),c(\alpha)\text{d}_{S'(E)}(f(z)))\}_{f(\hat{z})}$ in the intrinsic metric  of $S'(E)$. The balls in this collection cover the set $f(E)\cap B_i$. By the $5r$-covering theorem we find pairwise disjoint balls \begin{equation*}B_{S'(E)}(f(z^i_j),c(\alpha)\text{d}_{S'(E)}(f(z^i_j))),\,j=1,2,\ldots\end{equation*} such that $\{B_{S'(E)}(f(z^i_j),5c(\alpha)\text{d}_{S'(E)}(f(z^i_j)))\}_j$ covers $f(E)\cap B_i$.

 We have also an upper bound $N(\alpha)$ for the number $N_i$ of the pairwise disjoint intrinsic balls found above for each $f(E)\cap B_i$, since every ball $B_{S'(E)}(f(z^i_j),c(\alpha)\text{d}_{S'(E)}(f(z^i_j)))$ in the collection contains the euclidean disk $B(f(z^i_j),R_i/c(\alpha))$. Let the collection of finitely many such intrinsic balls chosen for each index $i$ be denoted together $\{B_{ij}\}_{\substack{i\in\mathbb{N}\\1\leq j\leq N_i}}$ where for given $i$ and $1\leq j\leq N_i$, $B_{ij}=B_{S'(E)}(f(z^i_j),5c(\alpha)\text{d}_{S'(E)}(f(z^i_j)))$. We have
\begin{equation*}
\begin{split}
 \underset{i}\sum (\text{diam}(B_i))^{\alpha} &\gtrsim \underset{i}\sum \sum_{j=1}^{N_i}(\text{diam}_{S'(E)}(B_{ij}))^{\alpha}\\
 &\geq c(\alpha) |f'(z_0)|^{\alpha}\sum_{i,j}(\text{diam}_{S(E)} (f^{-1}(B_{ij})))^{\alpha}\\
 & \geq c(\alpha)|f'(z_0)|^{\alpha}\mathcal{H}_{\infty}^{\alpha}(E)
 \end{split}
\end{equation*}
The lemma follows.
\end{proof}
\begin{proof}[Proof of Theorem \ref{main_reg}] Let $f$ be the conformal map from $\mathbb{H}$ to $\Omega$. Consider the set $E=E(f^{-1}(z),\alpha)$ obtained using Theorem \ref{reg_boundary}. Applying Lemma \ref{imagecontentbound} we have 
\begin{equation*}
 \mathcal{H}^{\alpha}_{\infty}(f(E))\gtrsim (e^{-2M}|f'(f^{-1}(z))|)^{\alpha}\mathcal{H}^{\alpha}_{\infty}(E).
\end{equation*}
Theorem \ref{main_reg} now follows by combining the above estimate with part (a) of Theorem \ref{reg_boundary} and observing that, by part (b), $z$ can be joined to $\partial\Omega$ by a curve which is the bilipschitz image of a curve in $\mathbb{H}$ of length comparable to $\text{Im}\,f^{-1}(z)$ joining $f^{-1}(z)$ to $\mathbb{R}$.

\end{proof}

\end{document}